\theoremstyle{definition}
\newtheorem{thm}{Theorem}[section]
\newtheorem{pro}[thm]{Proposition}
\newtheorem{cor}[thm]{Corollary}
\newtheorem{lem}[thm]{Lemma}
\newtheorem{ex}[thm]{Example}
\newtheorem{rem}[thm]{Remark}
\theoremstyle{definition}
\begin{document}

\title{Fundamental group of simple $C^*$-algebras with unique trace II}
\author{Norio Nawata}
\address[Norio Nawata]{Graduate School of Mathematics, 
Kyushu University, Motooka, 
Fukuoka, 819-0395,  Japan}      

\author{Yasuo Watatani}
\address[Yasuo Watatani]{Department of Mathematical Sciences, 
Kyushu University, Motooka, 
Fukuoka, 819-0395,  Japan}
\maketitle
\begin{abstract}
We show that any countable  subgroup of the multiplicative group 
$\mathbb{R}_+^{\times}$ of positive real numbers can be realized 
as the  fundamental group $\mathcal{F}(A)$  of a separable 
simple unital $C^*$-algebra $A$ with unique trace. 
Furthermore for any fixed countable subgroup $G$ of $\mathbb{R}_+^{\times}$, 
there exist uncountably many mutually nonisomorphic such algebras $A$ with 
$G = \mathcal{F}(A)$. 
\end{abstract}
\section{Introduction}
Let $M$ be a factor of type $II_1$ with a normalized trace 
$\tau$. Murray and von Neumann  introduced 
the fundamental group ${\mathcal F}(M)$ of $M$ in \cite{MN}. 
The fundamental group ${\mathcal F}(M)$ of $M$ is a 
subgroup of the multiplicative group 
$\mathbb{R}_+^{\times}$ of positive real numbers. 
They showed that if $M$ is  hyperfinite, then 
${\mathcal F}(M) = {\mathbb R_+^{\times}}$. 
In our previous paper \cite{NW}, 
we introduced the fundamental group $\mathcal{F}(A)$ 
of a simple unital $C^*$-algebra $A$ with a unique normalized trace $\tau$
based on the computation  of Picard groups by 
Kodaka \cite{kod1}, \cite{kod2}, \cite{kod3}. 
We compute the fundamental groups of several nuclear or nonnuclear 
$C^*$-algebras.   
$K$-theoretical obstruction enable us to compute the fundamental 
group easily. 

There has been many works on the computation of fundamental groups of the factors 
of type $II_1$. 
Voiculescu \cite{Vo} showed that the fundamental group 
${\mathcal F}(L(\mathbb{F}_{\infty}))$ 
of the group factor $L(\mathbb{F}_{\infty})$ 
of the free group $\mathbb{F}_{\infty}$ contains the positive rationals and 
Radulescu proved that 
${\mathcal F}(L(\mathbb{F}_{\infty})) = {\mathbb R}_+^{\times}$ in 
\cite{Ra}.  Connes \cite{Co} showed that if $G$ is an countable 
ICC group with property (T), then  ${\mathcal F}(L(G))$ is a countable group. 
Recently, Popa 
showed that any countable subgroup of $\mathbb R_+^{\times}$ 
can be realized as the fundamental group of some 
factor of type $II_1$ with separable predual in \cite{Po1}. 
Furthermore Popa and Vaes \cite{PV} exhibited a large family $\mathcal{S}$ 
of subgroups of $\mathbb{R}_{+}^\times$, containing $\mathbb{R}_{+}^\times$ 
itself, all of its countable subgroups, as well as uncountable subgroups with 
any Haussdorff dimension in $(0,1)$, such that for each $G\in\mathcal{S}$ 
there exist many free ergodic measure preserving actions of $\mathbb{F}_{\infty}$ 
for which the associated $II_1$ factor $M$ has fundamental group equal to $G$. 

In this paper we show that any countable subgroup of $\mathbb{R}_+^{\times}$ 
can be realized as the fundamental group of a separable simple unital 
$C^*$-algebra with unique trace. 
Furthermore for any fixed countable subgroup $G$ of $\mathbb{R}_+^{\times}$, 
there exist uncountably many mutually nonisomorphic such algebras $A$ with 
$G = \mathcal{F}(A)$. 
We apply a method of Blackadar \cite{Bla} and Phillips 
\cite{Phi} to the  type $II_1$ factors of  Popa \cite{Po1}. 
Our new examples  are nonnuclear. 

On the other hand, for an additive subgroup $E$ of 
$\mathbb{R}$ containing 1, 
we define the positive inner multiplier group $IM_+(E)$ of $E$  by 
$$
IM_+(E) = \{t \in {\mathbb R}_+^{\times} \ | t \in E, t^{-1}  \in E, \text{ and } 
        tE = E \}. 
$$
Then we have $\mathcal{F}(A) \subset IM_+(\tau_*(K_0(A)))$. 
Almost all examples provided in \cite{NW} 
satisfy  $\mathcal{F}(A)=IM_+(\tau_*(K_0(A)))$. 
We should note  that not all countable subgroups of $\mathbb{R}_{+}^{\times}$ 
arise as  $IM_+(E)$. For example, 
$\{9^n \in \mathbb{R}_{+}^{\times} \ | 
n \in {\mathbb Z} \}$ does not arise as $IM_+(E)$ 
for any additive subgroup $E$ of $\mathbb{R}$ containing 1. Therefore 
if the fundamental group a $C^*$-algebra $A$ is equal to 
$\{9^n \in \mathbb{R}_{+}^{\times} \ | 
n \in {\mathbb Z} \}$ and $A$ is in a classifiable class  by 
the Elliott invariant, then  
$\tau_* : K_0(A) \rightarrow \tau_*(K_0(A))$ cannot  be an 
order isomorphism. Matui informed us that there exists such an AF-algebra.

\section{Hilbert $C^*$-modules and Picard groups} 
We recall some definitions and notations in \cite{NW}. 
Let $A$ be a simple unital $C^*$-algebra with a unique normalized trace $\tau$ 
and 
$\mathcal{X}$ a right Hilbert $A$-module. 
(See \cite{Lan}, \cite{MT} for the basic facts on Hilbert modules.)  
We denote by $L_A(\mathcal{X})$ 
the algebra of the adjointable operators on $\mathcal{X}$. 
For $\xi,\eta \in \mathcal{X}$, a  "rank one operator" $\Theta_{\xi,\eta}$
is defined by $\Theta_{\xi,\eta}(\zeta) 
= \xi \langle\eta,\zeta\rangle_A$ for $\zeta \in \mathcal{X}$. 
We denote by $K_A(\mathcal{X})$ the closure 
of the linear span of "rank one operators" $\Theta_{\xi,\eta}$.
We call a finite 
set $\{\xi_i\}_{i=1}^n\subseteq \mathcal{X}$ a {\it finite basis} of $\mathcal{X}$ if 
$\eta =\sum_{i=1}^n\xi_i\langle\xi_i,\eta\rangle_A$ for any $\eta\in\mathcal{X}$, see \cite{KW}, \cite{W}. 
It is also called a frame as in \cite{FL}.   
If $A$ is unital and there exists a finite basis for  $\mathcal{X}$, then 
$L_A(\mathcal{X})=K_A(\mathcal{X})$. 
Let $\mathcal{H}(A)$ denote the 
set of isomorphic classes $[\mathcal{\mathcal{X}}]$ of 
right Hilbert $A$-modules $\mathcal{X}$ with finite basis. 

Let $B$ be a $C^*$algebra. 
An $A$-$B$-equivalence bimodule is an $A$-$B$-bimodule $\mathcal{F}$ which is simultaneously a 
full left Hilbert $A$-module under a left $A$-valued inner product $_A\langle\cdot ,\cdot\rangle$ 
and a full right Hilbert $B$-module under a right $B$-valued inner product $\langle\cdot ,\cdot\rangle_B$, 
satisfying $_A\langle\xi ,\eta\rangle\zeta =\xi\langle\eta ,\zeta\rangle_B$ for any 
$\xi, \eta, \zeta \in \mathcal{F}$. We say that $A$ is {\it Morita equivalent} to $B$ 
if there exists an $A$-$B$-equivalence bimodule. 
The dual module $\mathcal{F}^*$ of an $A$-$B$-equivalence bimodule $\mathcal{F}$ is a set 
$\{\xi^* ;\xi\in\mathcal{F} \}$ with the operations such that $\xi^* +\eta^*=(\xi +\eta )^*$, 
$\lambda\xi ^*=(\overline{\lambda}\xi)^*$, $b\xi^* a=(a^*\xi b^*)^*$, 
$_B\langle\xi^*,\eta^*\rangle =\langle\eta ,\xi\rangle_B$ and 
$\langle \xi^*,\eta^*\rangle_A =\;_A\langle\eta ,\xi\rangle$. 
Then $\mathcal{F}^*$ is a $B$-$A$-equivalence bimodule. 
We refer the reader to \cite{RW},\cite{R2} for the basic facts on 
equivalence bimodules and Morita equivalence. 

We review elementary facts on the Picard groups of 
$C^*$-algebras introduced by Brown, Green and Rieffel 
in \cite{BGR}. 
For $A$-$A$-equivalence bimodules 
$\mathcal{E}_1$ and 
$\mathcal{E}_2$, we say that $\mathcal{E}_1$ is isomorphic to $\mathcal{E}_2$ as an equivalence 
bimodule if there exists a $\mathbb{C}$-liner one-to-one map $\Phi$ of $\mathcal{E}_1$ onto 
$\mathcal{E}_2$ with the properties such that $\Phi (a\xi b)=a\Phi (\xi )b$, 
$_A\langle \Phi (\xi ) ,\Phi(\eta )\rangle =\;_A\langle \xi ,\eta\rangle$ and 
$\langle \Phi (\xi ) ,\Phi(\eta )\rangle_A =\langle\xi,\eta\rangle_A$ for $a,b\in A$, 
$\xi ,\eta\in\mathcal{E}_1$. 
The set of isomorphic classes $[\mathcal{E}]$ of the $A$-$A$-equivalence 
bimodules $\mathcal{E}$ forms a group under the product defined by 
$[\mathcal{E}_1][\mathcal{E}_2] = [\mathcal{E}_1 \otimes_A \mathcal{E}_2]$. 
We call it the {\it Picard group} of $A$ and denote it  by  $\mathrm{Pic}(A)$. 
The identity of $\mathrm{Pic}(A)$ is given by 
the $A$-$A$-bimodule $\mathcal{E}:= A$ with  
$\; _A\langle a_1 ,a_2 \rangle = a_1a_2^*$ and $\langle a_1 ,a_2\rangle_A = a_1^*a_2$ for 
$a_1,a_2 \in A$. The inverse element of $[\mathcal{E}]$ in the Picard group of $A$ 
is the dual module $[\mathcal{E}^*]$. 
Let $\alpha$ be an automorphism of $A$, and let 
$\mathcal{E}_{\alpha}^A=A$ with the obvious left $A$-action and the obvious $A$-valued inner product. 
We define the right $A$-action on $\mathcal{E}_\alpha^A$ by 
$\xi\cdot a=\xi\alpha(a)$ for 
any $\xi\in\mathcal{E}_\alpha^A$ and $a\in A$, and the right $A$-valued inner product by 
$\langle\xi ,\eta\rangle_A=\alpha^{-1} (\xi^*\eta)$ for any $\xi ,\eta\in\mathcal{E}_\alpha^A$.
Then $\mathcal{E}_{\alpha}^A$ is an $A$-$A$-equivalence bimodule. For $\alpha, \beta\in\mathrm{Aut}(A)$, 
$\mathcal{E}_\alpha^A$ is isomorphic to $\mathcal{E}_\beta^A$ if and only if 
there exists a unitary $u \in A$ such that 
$\alpha = ad \ u \circ \beta $. Moreover, ${\mathcal E}_\alpha^A \otimes 
{\mathcal E}_\beta^A$ is 
isomorphic to $\mathcal{E}_{\alpha\circ\beta}^A$. Hence we obtain an homomorphism $\rho_A$ 
of $\mathrm{Out}(A)$ to $\mathrm{Pic}(A)$. 
An $A$-$B$-equivalence bimodule $\mathcal{F}$ induces an isomorphism $\Psi$ 
of $\mathrm{Pic}(A)$ to $\mathrm{Pic}(B)$ by 
$\Psi ([\mathcal{E}])=[\mathcal{F}^*\otimes\mathcal{E}\otimes\mathcal{F}]$ 
for $[\mathcal{E}]\in\mathrm{Pic}(A)$. 
Therefore if $A$ is Morita equivalent to $B$, then $\mathrm{Pic}(A)$ is isomorphic to 
$\mathrm{Pic}(B)$. 
Since $A$ is unital, any 
$A$-$A$-equivalence bimodule is a finitely generated projective $A$-module as a right 
module with a finite basis $\{\xi_i\}_{i=1}^n$. 
Put  $p=(\langle\xi_i,\xi_j\rangle_A)_{ij} \in M_n(A)$. 
Then $p$ is a projection and $\mathcal{E}$ is isomorphic to 
$pA^n$ as a right Hilbert $A$-module 
with an isomorphism  of $A$ to  $pM_n(A)p$ as a $C^*$-algebra.

Define a map $\hat{T}_A : \mathcal{H}(A) \rightarrow 
\mathbb{R}_{+}$ by 
$\hat{T}_A([\mathcal{X}])=\sum_{i=1}^n\tau (\langle\xi_i,\xi_i\rangle_A)$, 
where $\{\xi_i\}_{i=1}^n$ is a finite basis of $\mathcal{X}$. 
Then $\hat{T}_A([\mathcal{X}])$  
does not depend on the choice of basis and $\hat{T}_A$ is well-defined. 
We can define a map $T_A$ of $\mathrm{Pic}(A)$ to $\mathbb{R}_{+}$ 
by the same way of $\hat{T}_A$. 
We showed that $T_A$ is a multiplicative map 
and $T_A(\mathcal{E}_{id}^A) = 1$ in \cite{NW}. 
Moreover, we can show the following proposition by a similar argument 
in the proof of Proposition 2.1 in \cite{NW}. 

\begin{pro}\label{pro:multiplicative}
Let $A$ and $B$ be simple unital $C^*$-algebras with unique trace. 
Assume that $\mathcal{F}$ is an $A$-$B$-equivalence bimodule and 
$\mathcal{X}$ is a right Hilbert $A$-module. Then 
\[\hat{T}_B([\mathcal{X}\otimes\mathcal{F}])=
   \hat{T}_A([\mathcal{X}])\hat{T}_B([\mathcal{F}]).\]
\end{pro}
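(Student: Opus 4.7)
The plan is to reduce everything to the uniqueness of trace on $A$ via an auxiliary trace-like functional, exactly paralleling the strategy of Proposition 2.1 in \cite{NW}.

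First, I would fix a finite basis $\{\xi_i\}_{i=1}^n$ of $\mathcal{X}$ as a right Hilbert $A$-module and a finite basis $\{\eta_j\}_{j=1}^m$ of $\mathcal{F}$ as a right Hilbert $B$-module. A short calculation using the reconstruction identities for both bases and the imprimitivity condition ${}_A\langle\xi,\eta\rangle\zeta=\xi\langle\eta,\zeta\rangle_B$ shows that $\{\xi_i\otimes\eta_j\}_{i,j}$ is a finite basis of $\mathcal{X}\otimes_A\mathcal{F}$ as a right Hilbert $B$-module. With the standard formula $\langle\xi_i\otimes\eta_j,\xi_k\otimes\eta_l\rangle_B=\langle\eta_j,\langle\xi_i,\xi_k\rangle_A\cdot\eta_l\rangle_B$ for the interior tensor product inner product, this gives
\[
\hat{T}_B([\mathcal{X}\otimes\mathcal{F}])=\sum_{i,j}\tau_B\bigl(\langle\eta_j,\langle\xi_i,\xi_i\rangle_A\cdot\eta_j\rangle_B\bigr).
\]

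Second, I would introduce the auxiliary functional $\phi:A\to\mathbb{C}$ defined by $\phi(a)=\sum_{j}\tau_B(\langle\eta_j,a\cdot\eta_j\rangle_B)$ and verify that $\phi$ is a positive bounded trace on $A$. Positivity and boundedness are immediate from the formula; the trace property $\phi(ab)=\phi(ba)$ follows by expanding $b\cdot\eta_j=\sum_k\eta_k\langle\eta_k,b\cdot\eta_j\rangle_B$ via the basis of $\mathcal{F}$, substituting, and invoking the trace property of $\tau_B$ on $B$. Since $A$ has a unique normalized trace, this forces $\phi=\phi(1_A)\tau_A$, where
\[
\phi(1_A)=\sum_{j}\tau_B(\langle\eta_j,\eta_j\rangle_B)=\hat{T}_B([\mathcal{F}]).
\]

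Finally, applying the identity $\phi(a)=\hat{T}_B([\mathcal{F}])\tau_A(a)$ to each $a=\langle\xi_i,\xi_i\rangle_A$ and summing over $i$ yields
\[
\hat{T}_B([\mathcal{X}\otimes\mathcal{F}])=\hat{T}_B([\mathcal{F}])\sum_{i=1}^n\tau_A(\langle\xi_i,\xi_i\rangle_A)=\hat{T}_B([\mathcal{F}])\,\hat{T}_A([\mathcal{X}]).
\]
The only nonformal step is the verification that $\phi$ is a trace, which is the main obstacle but is a routine computation of the same type already carried out in \cite{NW}; once that is in hand, uniqueness of trace on $A$ forces the desired multiplicativity and everything else is bookkeeping.
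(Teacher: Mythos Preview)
Your proposal is correct and follows exactly the route the paper indicates: the paper does not spell out a proof but simply says the result follows ``by a similar argument in the proof of Proposition 2.1 in \cite{NW},'' and your argument---tensoring bases, defining the auxiliary functional $\phi(a)=\sum_j\tau_B(\langle\eta_j,a\eta_j\rangle_B)$, checking it is a trace, and invoking uniqueness of trace on $A$---is precisely that adaptation. There is nothing to add.
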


We denote by $Tr$ the usual unnormalized trace on $M_n(\mathbb{C})$. 
 Put 
$$
\mathcal{F}(A) :=\{ \tau\otimes Tr(p) \in \mathbb{R}^{\times}_{+}\ | \ 
 p \text{ is a projection in } M_n(A) \text{ such that } pM_n(A)p  \cong A \}. 
$$
Then $\mathcal{F}(A)$ is equal to 
the image of $T_A$ and 
a multiplicative subgroup of $\mathbb{R}^{\times}_{+}$ by Theorem 3.1 in \cite{NW}. 
We call 
$\mathcal{F}(A)$ the {\it fundamental group} of $A$. 
If $A$ is separable, then $\mathcal{F}(A)$ is countable. 
We shall show that the fundamental group is a Morita equivalence invariant for 
simple unital $C^*$-algebras with unique trace.

\begin{pro}
Let $A$ and $B$ be simple unital $C^*$-algebras with unique trace. 
If $A$ is Morita equivalent to $B$, then $\mathcal{F}(A)=\mathcal{F}(B)$. 
\end{pro}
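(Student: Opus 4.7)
My plan is to exploit three facts already established in the excerpt: (i) $\mathcal{F}(A)$ coincides with the image of $T_A : \mathrm{Pic}(A) \to \mathbb{R}_+^\times$; (ii) an $A$-$B$-equivalence bimodule $\mathcal{F}$ induces a group isomorphism $\Psi : \mathrm{Pic}(A) \to \mathrm{Pic}(B)$, $[\mathcal{E}] \mapsto [\mathcal{F}^*\otimes_A \mathcal{E}\otimes_A \mathcal{F}]$; and (iii) the multiplicativity in Proposition~\ref{pro:multiplicative}. The strategy is to check that $T_B \circ \Psi = T_A$; once this is in hand, surjectivity of $\Psi$ gives $T_B(\mathrm{Pic}(B)) = T_A(\mathrm{Pic}(A))$ and hence $\mathcal{F}(A) = \mathcal{F}(B)$.

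For the main computation, fix $[\mathcal{E}] \in \mathrm{Pic}(A)$ and view $\mathcal{F}^*$ as a right Hilbert $A$-module with finite basis (unitality of $B$ together with the fact that $\mathcal{F}^*$ is a $B$-$A$-equivalence bimodule gives such a basis). Applying Proposition~\ref{pro:multiplicative} first to $\mathcal{X} = \mathcal{F}^*$ with bimodule $\mathcal{E}$ (regarded as an $A$-$A$-equivalence bimodule), and then to $\mathcal{X} = \mathcal{F}^* \otimes_A \mathcal{E}$ with bimodule $\mathcal{F}$, I obtain
\[
\hat{T}_B([\mathcal{F}^*\otimes_A \mathcal{E}\otimes_A \mathcal{F}])
= \hat{T}_A([\mathcal{F}^*])\,\hat{T}_A([\mathcal{E}])\,\hat{T}_B([\mathcal{F}]).
\]
So proving $T_B(\Psi([\mathcal{E}])) = T_A([\mathcal{E}])$ reduces to the normalization identity $\hat{T}_A([\mathcal{F}^*])\,\hat{T}_B([\mathcal{F}]) = 1$.

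This normalization is the one delicate point, because $\hat{T}_A([\mathcal{F}^*])$ and $\hat{T}_B([\mathcal{F}])$ are computed with different traces on different algebras. The trick is to compose $\mathcal{F}^*$ and $\mathcal{F}$: the standard identification $\mathcal{F}^* \otimes_A \mathcal{F} \cong B$ as $B$-$B$-equivalence bimodules gives $T_B([\mathcal{F}^*\otimes_A\mathcal{F}]) = \tau_B(1_B) = 1$, while Proposition~\ref{pro:multiplicative} applied to $\mathcal{X} = \mathcal{F}^*$ and bimodule $\mathcal{F}$ yields $\hat{T}_B([\mathcal{F}^*\otimes_A\mathcal{F}]) = \hat{T}_A([\mathcal{F}^*])\,\hat{T}_B([\mathcal{F}])$. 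Combining these two expressions gives the desired identity.

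Putting it all together, $T_B \circ \Psi = T_A$ on $\mathrm{Pic}(A)$, and since $\Psi$ is a bijection the images agree: $\mathcal{F}(A) = \mathcal{F}(B)$. The main obstacle is the normalization step, both because it requires invoking the canonical isomorphism $\mathcal{F}^*\otimes_A \mathcal{F} \cong B$ and because one must be careful that Proposition~\ref{pro:multiplicative} is being applied with $\mathcal{X}$ and $\mathcal{F}$ playing the correct roles over the correct algebras; everything else is a mechanical consequence of multiplicativity.
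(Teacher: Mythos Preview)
Your proof is correct and follows essentially the same approach as the paper: use the Picard group isomorphism $\Psi$ induced by $\mathcal{F}$, apply Proposition~\ref{pro:multiplicative} twice to get $T_B(\Psi([\mathcal{E}])) = \hat{T}_A([\mathcal{F}^*])\,T_A([\mathcal{E}])\,\hat{T}_B([\mathcal{F}])$, and establish the normalization $\hat{T}_A([\mathcal{F}^*])\,\hat{T}_B([\mathcal{F}]) = 1$ via $\mathcal{F}^*\otimes_A\mathcal{F} \cong \mathcal{E}_{id}^B$. The only cosmetic difference is the order in which you peel off the tensor factors when applying Proposition~\ref{pro:multiplicative}.
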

\begin{proof}
By assumption, there exists an $A$-$B$-equivalence bimodule $\mathcal{F}$, 
and $\mathcal{F}$ induces an isomorphism 
$\Psi$ of $\mathrm{Pic}(A)$ to $\mathrm{Pic}(B)$ such that 
$\Psi ([\mathcal{E}])=[\mathcal{F}^*\otimes\mathcal{E}\otimes\mathcal{F}]$ for 
$[\mathcal{E}]\in\mathrm{Pic}(A)$. 
Since $\mathcal{F}^*\otimes\mathcal{F}$ is isomorphic to $\mathcal{E}_{id}^B$, 
Proposition \ref{pro:multiplicative} implies 
\[\hat{T}_A([\mathcal{F}^*])\hat{T}_B([\mathcal{F}])=
T_B([\mathcal{F}^*\otimes\mathcal{F}])=1.\] 

For $[\mathcal{E}]\in \mathrm{Pic}(A)$, 
\[T_B([\mathcal{F}^*\otimes\mathcal{E}\otimes\mathcal{F}])=
\hat{T}_A([\mathcal{F}^*])\hat{T}_B([\mathcal{E}\otimes\mathcal{F}])
=\hat{T}_A([\mathcal{F}^*])T_A([\mathcal{E}])\hat{T}_B([\mathcal{F}])\]

by Proposition \ref{pro:multiplicative}. 
Therefore $T_B([\Psi (\mathcal{E})])=T_A([\mathcal{E}])$ and 
$\mathcal{F}(A)=\mathcal{F}(B)$.
\end{proof}

\section{New examples}

An idea of our construction comes from the following results of 
Blackadar, Proposition 2.2 of \cite{Bla} and 
Phillips, Lemma 2.2 of \cite{Phi}. 
\begin{lem}[\cite{Bla}(Blackadar)]  
Let $M$ be a simple $C^*$-algebra, and let $A\subset M$ be a separable $C^*$-subalgebra. 
Then there exists a simple separable $C^*$-subalgebra $B$ with $A\subset B\subset M$. 
\end{lem}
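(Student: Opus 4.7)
The plan is to build $B$ as the norm closure of a nested union of separable $C^*$-subalgebras $A = A_0 \subseteq A_1 \subseteq A_2 \subseteq \cdots$ of $M$, where at each stage $A_n$ is enriched by finitely many elements of $M$ that witness, for a countable dense subset of $A_n$, the simplicity of $M$. The key observation is that since $M$ is simple, for any nonzero $a \in M$ and any $b \in M$ the element $b$ lies in the closed two-sided ideal of $M$ generated by $a$; hence for each $\varepsilon > 0$ there exist finitely many $x_i, y_i \in M$ with $\|\sum_i x_i a y_i - b\| < \varepsilon$.

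To carry out the construction, I would fix for each $n \geq 0$ a countable dense subset $D_n \subseteq A_n$, and for each triple $(a, b, k)$ with $a \in D_n \setminus \{0\}$, $b \in D_n$, and $k \in \mathbb{N}$, choose a finite collection of witnesses $x_i^{(a,b,k)}, y_i^{(a,b,k)} \in M$ with approximation error less than $1/k$. Let $A_{n+1}$ be the $C^*$-subalgebra of $M$ generated by $A_n$ together with all these countably many new elements; then $A_{n+1}$ is again separable. Setting $B := \overline{\bigcup_n A_n}$ yields a separable $C^*$-subalgebra with $A \subseteq B \subseteq M$, and it remains to verify simplicity.

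For simplicity, let $I \subseteq B$ be a nonzero closed two-sided ideal, pick a nonzero $a \in I$ and an arbitrary $b \in B$, and fix $\varepsilon > 0$. Approximate $a$ by a nonzero $a' \in D_n$ and $b$ by $b' \in D_m$; at stage $\ell = \max(n, m) + 1$ we have adjoined elements $x_i, y_i \in A_\ell \subseteq B$ with $\|\sum_i x_i a' y_i - b'\| < 1/k$ for any prescribed $k$. Writing $C := \sum_i \|x_i\| \|y_i\|$, the triangle inequality gives $\|\sum_i x_i a y_i - b\| \leq \|b - b'\| + 1/k + C \|a - a'\|$, which can be made smaller than $\varepsilon$ by choosing $k$ large and $a', b'$ sufficiently close (the latter being controlled relative to $C$ at the chosen stage). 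Since $\sum_i x_i a y_i \in I$ and $I$ is closed, $b \in I$, and therefore $I = B$. The main obstacle is the bookkeeping of this inductive construction together with the perturbation estimate: the constant $C$ depends on the witnesses chosen at an earlier stage, so the diagonal choice of $k$ and the norm approximations have to be arranged so that the approximate witnesses produced at finite stages assemble into exact ideal membership in the limit.
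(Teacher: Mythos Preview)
The paper does not supply its own proof of this lemma; it is quoted directly from Blackadar. Your overall plan --- inductively enlarge $A$ by adjoining, for each pair from a countable dense set, finitely many ideal witnesses drawn from $M$, and then pass to the closure of the union --- is exactly Blackadar's strategy, so at the architectural level you are on target.

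The gap is the one you yourself flag in the last paragraph and do not close. In the estimate $\|\sum_i x_i a y_i - b\| \le \|b-b'\| + 1/k + C\,\|a-a'\|$ the constant $C=\sum_i\|x_i\|\,\|y_i\|$ is determined by the witnesses attached to the \emph{particular} approximant $a'\in D_n$; you cannot choose $\|a-a'\|$ small relative to $C$ after the fact, and there is no uniform bound on $C$ as $a'$ varies over approximants of $a$. No diagonal bookkeeping repairs this: the statement you would in effect be proving --- that $B$ is simple whenever every nonzero element of some dense set generates $B$ as a closed ideal --- is false in general (take $B=\mathbb{C}^2$ and let $D$ be the invertible elements together with $0$). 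What is missing is a device that produces, from an arbitrary nonzero positive $a\in I$, a nonzero element of $I$ that actually lies in some $D_n$, so that the adjoined witnesses apply with \emph{no} perturbation. The standard tool is the elementary cut-down fact: if $a,a'\ge 0$ and $\|a-a'\|<\varepsilon$, then $(a'-\varepsilon)_+$ belongs to the closed two-sided ideal generated by $a$. If one arranges in advance that each $D_n$ is closed under $d\mapsto (d-q)_+$ for positive rationals $q$ (still countable), then choosing a positive $a'\in D_n$ with $\|a-a'\|$ small yields a nonzero $c=(a'-\varepsilon')_+\in D_n\cap I$ for a suitable rational $\varepsilon'$, and the witnesses already adjoined for $c$ show $I=B$ directly.
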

\begin{lem}[\cite{Phi}(Phillips)]
Let $M$ be a unital $C^*$-algebra, and let $A\subset M$ be a separable $C^*$-subalgebra. 
Then there exists a separable $C^*$-subalgebra $B$ with $A\subset B\subset M$ such that 
every tracial state on $B$ is the restriction of a tracial state on $M$. 
\end{lem}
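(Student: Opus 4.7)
My plan is to build $B$ as the closure of an increasing union $A=A_0\subset A_1\subset\cdots$ of separable subalgebras of $M$, adjoining countably many ``witnesses'' at each stage that force tracial states on $B$ to coincide with restrictions of tracial states on $M$. The starting observation is a Hahn--Banach characterization: a tracial state $\sigma$ on a subalgebra $C\subset M$ extends to a tracial state on $M$ if and only if
\[
r(x)\le\sigma(x)\le R(x)\quad\text{for every self-adjoint } x\in C,
\]
where $R(x):=\sup\{\tau(x):\tau\in T(M)\}$ and $r(x):=\inf\{\tau(x):\tau\in T(M)\}$. Indeed, $T(M)|_{C}$ is a weak-$*$ compact convex subset of $T(C)$, separated from any $\sigma\notin T(M)|_{C}$ only by evaluations at elements of $C_{sa}$.

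The engine of the construction will be a bipolar-style argument. The closed convex cone generated in $M_{sa}$ by $M_{+}$ together with the self-adjoint part of the commutator subspace $[M,M]$ has dual cone equal to the set of positive tracial linear functionals on $M$; hence by the bipolar theorem this cone coincides with $\{y\in M_{sa}:\tau(y)\ge 0\text{ for all } \tau\in T(M)\}$. Applied to $y=R(x)\cdot 1-x$, this yields: for each self-adjoint $x\in M$ and each $\epsilon>0$ there exist $b\in M_{+}$ and finitely many pairs $p_i,q_i\in M$ with
\[
\Bigl\|R(x)\cdot 1-x-b-\sum_i[p_i,q_i]\Bigr\|<\epsilon,
\]
and symmetrically for $r(x)$.

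I would then construct the $A_k$ inductively. Given $A_k$ separable, fix a dense sequence $\{a_n^{(k)}\}$ in $(A_k)_{sa}$, and for each $n,j\ge 1$ choose the above witnesses with $\epsilon=1/j$, both for $R(a_n^{(k)})$ and for $r(a_n^{(k)})$. Let $A_{k+1}$ be the $C^*$-subalgebra of $M$ generated by $A_k$ together with these countably many witnesses; it is separable, so $B:=\overline{\bigcup_k A_k}$ is separable as well.

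For the verification, take $\sigma\in T(B)$ and a self-adjoint $x\in B$. Approximate $x$ by some $a_n^{(k)}$ and apply $\sigma$ to the witness identity. The commutator sum lies in $B$, so its $\sigma$-value is zero because $\sigma$ is tracial on $B$; and $\sigma(b_{n,j}^{(k)})\ge 0$ because positivity is preserved under $B\hookrightarrow M$. Letting $j\to\infty$ gives $\sigma(a_n^{(k)})\le R(a_n^{(k)})$, and passing to $x$ by norm continuity gives $\sigma(x)\le R(x)$; the symmetric argument gives $\sigma(x)\ge r(x)$, so by the first step $\sigma$ extends. I expect the main obstacle to be the bipolar identification, including the crucial observation that it allows one to ``internalize'' each inequality using only finitely many elements of $M$, keeping the witnesses added per stage countable so that $B$ remains separable.
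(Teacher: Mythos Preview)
The paper does not supply its own proof of this lemma: it is quoted verbatim as Lemma~2.2 of Phillips \cite{Phi} and used as a black box. So there is no in-paper argument to compare against; the relevant comparison is with Phillips' original proof.

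Your sketch is essentially Phillips' argument. Phillips also builds $B$ as an increasing union of separable subalgebras obtained by adjoining, for each element of a countable dense set, finitely many ``witness'' elements of $M$ that certify the inequality $\sigma(x)\le R(x)$ (and the symmetric one). The source of these witnesses in Phillips' paper is the Cuntz--Pedersen description of $\{y\in M_{sa}:\tau(y)=0\text{ for all }\tau\in T(M)\}$ as the norm closure of the span of self-commutators; your bipolar formulation, identifying the closed cone generated by $M_{+}$ and $[M,M]_{sa}$ with $\{y\in M_{sa}:\tau(y)\ge 0\text{ for all }\tau\in T(M)\}$, is an equivalent packaging of the same fact, and the subsequent verification (commutators vanish under any trace on $B$, positives stay positive, pass to the limit by norm continuity of $R$ and $r$) is exactly the intended one.

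One small point you should address explicitly: your Hahn--Banach step and the formula $R(x)\cdot 1-x$ presuppose $T(M)\neq\varnothing$. The lemma as stated has no such hypothesis. When $T(M)=\varnothing$ the bipolar cone is all of $M_{sa}$, so in particular $-1$ is a limit of elements of the form $b+\sum_i[p_i,q_i]$ with $b\ge 0$; adjoining a single set of such witnesses to $A$ already forces $T(B)=\varnothing$, and the conclusion holds vacuously. With that case noted, your argument is correct and matches Phillips' proof in substance.
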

The following lemma is just a combination of  the two results above.  
\begin{lem}\label{lem:key}
Let $M$ be a simple $C^*$-algebra with unique trace $\hat{\tau}$, and let $A\subset M$ 
be a separable $C^*$-subalgebra. 
Then there exists a simple separable $C^*$-subalgebra $B$ with $A\subset B\subset M$
such that $B$ has a unique trace $\tau$ that is a restriction of $\hat{\tau}$. 
\end{lem}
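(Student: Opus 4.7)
The plan is to build $B$ as the closure of an increasing union of separable subalgebras of $M$, where the two properties we want, simplicity and unique trace, are secured by alternating between Blackadar's and Phillips's lemmas at each stage.

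First I would construct an increasing sequence $A_0 = A \subset A_1 \subset A_2 \subset \cdots \subset M$ of separable $C^*$-subalgebras as follows. Having defined $A_n$, apply Blackadar's lemma to obtain a simple separable $C^*$-subalgebra $C_{n+1}$ with $A_n \subset C_{n+1} \subset M$, and then apply Phillips's lemma to obtain a separable $C^*$-subalgebra $A_{n+1}$ with $C_{n+1} \subset A_{n+1} \subset M$ such that every tracial state on $A_{n+1}$ is the restriction of a tracial state on $M$. Set $B := \overline{\bigcup_{n\ge 0} A_n} = \overline{\bigcup_{n\ge 1} C_n}$.

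Separability of $B$ is immediate from the countable union of separable sets. For the unique trace property, I would argue as follows: let $\phi$ be any tracial state on $B$. For each $n$, $\phi\mid_{A_n}$ is a tracial state on $A_n$, and by construction it is the restriction of some tracial state on $M$. Since $M$ has unique trace $\hat\tau$, this extension must equal $\hat\tau$, so $\phi\mid_{A_n} = \hat\tau\mid_{A_n}$ for every $n$. As $\bigcup_n A_n$ is dense in $B$ and both $\phi$ and $\hat\tau\mid_B$ are continuous, we conclude $\phi = \hat\tau\mid_B$, which simultaneously exhibits a trace on $B$ and shows uniqueness.

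The step that requires most care is simplicity. Let $J$ be a closed two-sided ideal of $B$ and consider the quotient map $\pi\colon B \to B/J$. If $\pi$ is injective on every $C_n$, then it is isometric on $\bigcup_n C_n$ and hence on $B$, so $J = 0$. Otherwise, simplicity of $C_n$ forces $\ker(\pi\mid_{C_n}) = C_n$ for some $n$, and since the $C_n$'s are nested and each subsequent one also has zero quotient image, we get $C_m \subset J$ for all $m \ge n$; by density, $B = J$. This is the main technical obstacle, but it is a well-known inductive limit argument. Combining the three verifications gives the desired $B$.
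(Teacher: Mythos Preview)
Your proposal is correct and is precisely the intended combination of Blackadar's and Phillips's lemmas; the paper itself does not spell out a proof, merely remarking that the lemma ``is just a combination of the two results above,'' and your alternating inductive-limit construction with the standard simplicity and unique-trace verifications is exactly how that combination goes. One small point worth making explicit is to adjoin $1_M$ to $A_0$ at the outset (so that each $A_n$ is unital and $\phi\mid_{A_n}$ is genuinely a tracial \emph{state}), but this is routine.
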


\begin{thm}\label{thm:main}
Let $G$ be a countable subgroup of $\mathbb{R}_{+}^{\times}$. 
Then there exist uncountably many mutually nonisomorphic  separable  
simple nonnuclear unital $C^*$-algebras $A$
with unique trace such that the fundamental group $\mathcal{F}(A)=G$. 
\end{thm}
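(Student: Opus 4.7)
The plan is to apply the Blackadar--Phillips subalgebra technology of Lemma \ref{lem:key} inside type $II_1$ factors with the prescribed fundamental group, drawing the uncountable family from the Popa--Vaes collection of pairwise nonisomorphic such factors. Fix, for the moment, one type $II_1$ factor $M$ with separable predual, unique normalized trace $\hat\tau$, and $\mathcal{F}(M)=G$. Since a $II_1$ factor is simple as a $C^*$-algebra (Dixmier's property), Lemma \ref{lem:key} applies. Enumerate $G=\{g_1,g_2,\dots\}$, and for each $g_j$ choose a witness: an integer $n_j$, a projection $p_j\in M_{n_j}(M)$ with $\hat\tau\otimes Tr(p_j)=g_j$, and a $*$-isomorphism $\phi_j : M\to p_jM_{n_j}(M)p_j$.

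Next I would construct a separable $C^*$-subalgebra $B\subseteq M$ satisfying simultaneously: (i) $B$ is simple with unique trace $\hat\tau|_B$; (ii) $B$ is weakly dense in $M$ in the tracial GNS representation; (iii) each $p_j$ lies in $M_{n_j}(B)$; (iv) $\phi_j$ restricts to a $*$-isomorphism $B\to p_jM_{n_j}(B)p_j$. Start from a separable $A_0$ containing a countable weakly dense $*$-subalgebra of $M$ together with all matrix entries of every $p_j$. Inductively enlarge: at stage $k$ first adjoin the countable sets $\phi_j(A_k)$ and $\phi_j^{-1}(p_jM_{n_j}(A_k)p_j)$ for $j\le k$ (still separable), then apply Lemma \ref{lem:key} to move one step closer to simplicity and uniqueness of trace. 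The closure $B=\overline{\bigcup_k A_k}$ of the resulting chain inherits (iii) and (iv) from the adjoining steps, (ii) from the weak density of $A_0$, and (i) from Lemma \ref{lem:key} combined with a standard diagonal bookkeeping on the enumeration.

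To verify $\mathcal{F}(B)=G$, the inclusion $G\subseteq\mathcal{F}(B)$ is immediate from (iii)--(iv): each $\phi_j$ gives $B\cong p_jM_{n_j}(B)p_j$ with $\hat\tau|_B\otimes Tr(p_j)=g_j$. For the reverse inclusion, given a projection $p\in M_n(B)$ with $B\cong pM_n(B)p$, the unique traces on both sides force the isomorphism to be trace-preserving, so it extends continuously to a $W^*$-isomorphism of GNS completions. By (ii) those completions are $M$ and $pM_n(M)p$, so $\hat\tau\otimes Tr(p)\in\mathcal{F}(M)=G$.

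For the uncountable family and nonnuclearity, I would run the construction inside each member $M_\alpha$ of an uncountable pairwise-nonisomorphic family of noninjective $II_1$ factors with $\mathcal{F}(M_\alpha)=G$ supplied by Popa and Popa--Vaes. If $B_\alpha\cong B_\beta$ as $C^*$-algebras, the isomorphism preserves the unique trace and extends by weak density to a $W^*$-isomorphism $M_\alpha\cong M_\beta$, forcing $\alpha=\beta$; and each $B_\alpha$ is nonnuclear because its tracial GNS weak closure $M_\alpha$ is noninjective, whereas the bidual of a nuclear $C^*$-algebra is injective. The main technical obstacle is the inductive construction of $B$: the three requirements (closure under all $\phi_j,\phi_j^{-1}$, approximate simplicity, approximate uniqueness of trace) must be interleaved in a single separable exhaustion rather than handled sequentially, so the diagonal bookkeeping, together with the verification that weak density survives the passage to the closure, is the technical heart of the argument.
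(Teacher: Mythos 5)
Your construction of a single algebra with $\mathcal{F}(A)=G$ is essentially the paper's: the same interleaved exhaustion inside one Popa factor $M$, closing under the $\phi_j$ and $\phi_j^{-1}$ while applying Lemma \ref{lem:key}, with $G\subseteq\mathcal{F}(A)$ read off from the witnesses and the reverse inclusion obtained from weak density (your inline GNS-extension argument is exactly the content of Proposition 3.29 of \cite{NW}, which the paper cites instead; the paper also economizes by enumerating only $G\cap(0,1]$, using projections in $M$ rather than matrix amplifications, which suffices since $\mathcal{F}(A)$ is a group). Where you genuinely diverge is the uncountability step. The paper stays inside the \emph{single} factor $M$ and varies a countable additive subgroup $E\subset\mathbb{R}$, adjoining projections with traces in $E$ so that $E\subseteq\tau_*(K_0(C_E))$; since no countable union of countable subgroups of $\mathbb{R}$ exhausts all countable subgroups, the trace images of $K_0$ separate uncountably many isomorphism (indeed Morita equivalence) classes, all with the same weak closure $M$. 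You instead vary the ambient factor over an uncountable family $\{M_\alpha\}$ with $\mathcal{F}(M_\alpha)=G$, and your reduction is sound: a $*$-isomorphism $B_\alpha\cong B_\beta$ preserves the unique traces and extends to $M_\alpha\cong M_\beta$. The trade-off: your route buys algebras distinguished at the von Neumann level, but it outsources the entire uncountability to Popa--Vaes, so it needs the stronger input that for \emph{every} countable $G$ their construction yields uncountably many pairwise non(-stably)-isomorphic $II_1$ factors with fundamental group exactly $G$ --- Popa \cite{Po1}, which is all the paper uses, supplies only one factor per $G$, and you should verify that the ``many'' in \cite{PV} means nonisomorphic factors rather than merely non-orbit-equivalent actions. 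The paper's $K_0$-image device avoids this dependence entirely, at the cost of producing algebras that are indistinguishable by their weak closures and must be separated by the Elliott-type invariant $\tau_*(K_0(\cdot))$.
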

\begin{proof}
First we shall show that there exists a separable 
simple unital $C^*$-algebra $A$ 
with unique trace such that $\mathcal{F}(A)=G$. 
There exists a type $II_1$ factor $M$ with separable predual such that 
$\mathcal{F}(M)=G$, which is constructed by Popa \cite{Po1}. 
Let $S_1\subset M$ be a countable subset that is weak operator dense in $M$. 
We denote by $\hat{\tau}$ the unique trace of $M$. 
We enumerate the countable semigroup $G\cap (0,1]$ 
by $\{t_m:m\in\mathbb{N}\}$. 
Since $\mathcal{F}(M)=G$ and $M$ is a factor of type $II_1$, 
for any $m\in\mathbb{N}$
there exist a projection $p_m$ 
in $M$ such that $\hat{\tau}(p_m)=t_m$ and an isomorphism $\phi_m$ of $M$ onto $p_mMp_m$ 
. 
Define $B_0\subset M$ be the unital $C^*$-subalgebra 
of $M$ generated by $S_1$ and $\{p_m:m\in\mathbb{N}\}$. 
By Lemma \ref{lem:key}, 
there exists a separable simple unital 
$C^*$-algebra $A_0$ with a unique trace $\tau_0$ 
such that $B_0\subset A_0\subset M$. 
Let $B_1\subset M$ be the  $C^*$-subalgebra 
of $M$ generated by $A_0$, $\cup_{m\in\mathbb{N}}\phi_m(A_0)$ and 
$\cup_{m\in\mathbb{N}}\phi_m^{-1}(p_mA_0p_m)$. By the same way, 
there exists a separable simple unital $C^*$-algebra $A_1$ 
with a unique trace $\tau_1$ 
such that $B_1\subset A_1\subset M$.
We construct inductively  $C^*$-algebras $B_n \subset A_n \subset M$ 
as follows: 
Let $B_n\subset M$ be the  $C^*$-subalgebra 
of $M$ generated by $A_{n-1}$, $\cup_{m\in\mathbb{N}}\phi_m(A_{n-1})$ and 
$\cup_{m\in\mathbb{N}}\phi_m^{-1}(p_mA_{n-1}p_m)$. By Lemma \ref{lem:key}, 
there exists a separable simple unital $C^*$-algebra $A_n$ 
with a unique trace $\tau_n$ 
such that $B_n\subset A_n\subset M$. 
Then we have 
$$
B_0 \subset A_0\subset B_1\subset A_1\subset \dots 
B_n \subset A_n \dots    \subset M, 
$$
and 
$\phi_m(A_{n-1})\subset p_mA_np_m$ and $\phi_m^{-1}(p_mA_{n-1}p_m)\subset A_n
\text { for any } m \in\mathbb{N}$. 
Set $A=\overline{\cup_{n=0}^{\infty}A_n}$. Then $A$ is a  separable 
simple unital $C^*$-algebra 
$A$ with a unique trace $\tau$. 
By the construction, 
$\phi_m(A)=p_mAp_m$ for any $m\in\mathbb{N}$. Hence 
$G \subset \mathcal{F}(A)$. 
Since $\pi_\tau (A)''$ is isomorphic to $M$, 
$$
\mathcal{F}(A) \subset \mathcal{F}(\pi_\tau (A)'') 
= \mathcal{F}(M) = G
$$
by Proposition 3.29 of \cite{NW}. 
Thus $\mathcal{F}(A) = G$. Moreover $A$ is not nuclear, because 
 $A$ is  weak operator dense in a factor $M$ that is not hyperfinite.

Next we shall show that there exist uncountably many mutually nonisomorphic 
such examples. 
Let $E$ be a countable additive subgroup of $\mathbb{R}$. 
We enumerate by $\{r_m:\in\mathbb{N}\}$ the positive elements of $E$. 
Since $M$ is a factor of type $II_1$,  for any 
$m\in\mathbb{N}$ there exist a natural number $k$ and 
a projection $q_m\in M_k(M)$ such that $\hat{\tau}\otimes Tr(q_m)=r_m$. 
Define $S_2 \subset M $ to be the union of  
the matrix elements of $q_m$ for running  $m \in\mathbb{N}$. 
Let $C_0$ the $C^*$-subalgebra of $M$ 
generated by $S_2$ and $A$. 
By a similar argument as the first paragraph, 
we can construct  a separable 
simple unital $C^*$-algebra $C$ with unique trace such that 
$\mathcal{F}(C)=G$ and $C_0\subset C \subset M$. 
Then it is clear that $E$ is contained in $\tau_*(K_0(C))$. 
Since no countable union of 
countable subgroups of $\mathbb{R}$ can contain all countable subgroups of $\mathbb{R}$, 
we can construct uncountably many mutually nonisomorphic examples by the choice of $E$. 
\end{proof}
\begin{rem}
In fact, we show that there exist uncountably many Morita inequivalent 
separable simple nonnuclear unital $C^*$-algebras $A$ with unique trace 
such that the fundamental group $\mathcal{F}(A)=G$ in the proof above. 
\end{rem}
\begin{rem}\label{rem:main}
We can choose a $C^*$-algebra $A$ in the theorem above 
so that $A$ has stable rank one and 
real rank zero and 
$\tau_* : K_0(A) \rightarrow \tau_*(K_0(A))$ is an order isomorphism 
by using Lemma 2.3, Lemma 2.4 and Lemma 2.5 of \cite{Phi}. 
Then  we have the following exact sequence 
by Proposition 3.26 of \cite{NW}: 
\[\begin{CD}
      {1} @>>> \mathrm{Out}(A) @>\rho_A>> \mathrm{Pic}(A) @>T>> \mathcal{F}(A)
 @>>> {1} \end{CD}. \]
\end{rem}
\begin{rem}
 We do not know whether any countable subgroup 
of $\mathbb{R}_{+}^{\times}$ can be realized as the fundamental group 
of  a separable unital simple 
{\it nuclear}  $C^*$-algebra 
with unique trace. 
\end{rem}

\begin{lem}\label{lem:K0}
Let $M_1$ and $M_2$ be factors of type $II_1$, and let $A_0\subset M_1$ and $B_0\subset M_2$ 
be separable $C^*$-subalgebras. 
Then there exist separable simple unital $C^*$-algebras $A$ and $B$ with the unique 
traces $\tau_A$ and $\tau_B$ such that 
$A_0\subset A\subset M_1$, $B_0\subset B\subset M_2$ and $(\tau_{A})_*(K_0(A))=(\tau_{B})_*(K_0(B))$. 
\end{lem}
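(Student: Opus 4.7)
The plan is to carry out a back-and-forth interlacing construction, alternately enlarging the two sides so that at each stage every positive value in the $K_0$-trace image of one algebra is realized in the other. Let $\hat{\tau}_1, \hat{\tau}_2$ denote the unique traces on $M_1, M_2$ respectively.

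First I would apply Lemma~\ref{lem:key} to $A_0 \subset M_1$ and $B_0 \subset M_2$ to obtain separable simple unital $C^*$-subalgebras $\widetilde{A}_0, \widetilde{B}_0$ with unique traces that are restrictions of $\hat{\tau}_1, \hat{\tau}_2$. For the inductive step, suppose $\widetilde{A}_n \subset M_1$ and $\widetilde{B}_n \subset M_2$ have been built. Since $\widetilde{A}_n$ is separable, the image $(\tau_{\widetilde{A}_n})_*(K_0(\widetilde{A}_n))$ is a countable subgroup of $\mathbb{R}$; I would enumerate its positive elements as $\{r_{n,m}\}_{m \in \mathbb{N}}$. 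Because $M_2$ is a $II_1$ factor, for each $r_{n,m}$ there is a projection $q_{n,m}$ in some $M_{k_{n,m}}(M_2)$ with $\hat{\tau}_2 \otimes Tr(q_{n,m}) = r_{n,m}$. Let $C_n$ be the $C^*$-subalgebra of $M_2$ generated by $\widetilde{B}_n$ together with all matrix entries of the $q_{n,m}$; Lemma~\ref{lem:key} then produces a separable simple unital monotrace $\widetilde{B}_{n+1}$ with $C_n \subset \widetilde{B}_{n+1} \subset M_2$. By construction $(\tau_{\widetilde{A}_n})_*(K_0(\widetilde{A}_n)) \subset (\tau_{\widetilde{B}_{n+1}})_*(K_0(\widetilde{B}_{n+1}))$. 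Swapping the roles of the two sides produces $\widetilde{A}_{n+1} \supset \widetilde{A}_n$ inside $M_1$ with $(\tau_{\widetilde{B}_{n+1}})_*(K_0(\widetilde{B}_{n+1})) \subset (\tau_{\widetilde{A}_{n+1}})_*(K_0(\widetilde{A}_{n+1}))$.

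Then I would set $A := \overline{\bigcup_n \widetilde{A}_n}$ and $B := \overline{\bigcup_n \widetilde{B}_n}$. As in the proof of Theorem~\ref{thm:main}, $A$ is simple (a unital inductive limit of simple algebras) and carries a unique trace $\tau_A$, namely the restriction of $\hat{\tau}_1$: any other trace on $A$ would restrict on each $\widetilde{A}_n$ to its unique trace and would therefore coincide with $\tau_A$ on a dense set; the same reasoning applies to $B$. Continuity of $K_0$ under $C^*$-inductive limits gives $(\tau_A)_*(K_0(A)) = \bigcup_n (\tau_{\widetilde{A}_n})_*(K_0(\widetilde{A}_n))$ and similarly for $B$, and the interlacing inclusions force these two countable subgroups of $\mathbb{R}$ to coincide.

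The main obstacle is really just the bookkeeping needed to guarantee that every positive value in the $K_0$-trace image of either side is eventually captured by the other side; this is handled by enumerating the positive trace values at each stage and using the freedom available in a $II_1$ factor to realize them by projections in matrix algebras, followed by Blackadar--Phillips absorption via Lemma~\ref{lem:key} to keep the intermediate algebras simple, separable, unital and monotrace. Once the alternating enumeration is arranged carefully, the remaining verifications (simplicity, uniqueness of the trace, and $K_0$-continuity at the limit) are routine.
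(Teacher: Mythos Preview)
Your proposal is correct and follows essentially the same back-and-forth scheme as the paper: enumerate the positive trace values of $K_0$ on one side, realize them by projections in matrix algebras over the other $II_1$ factor, absorb the matrix entries into a larger separable simple monotrace subalgebra via Lemma~\ref{lem:key}, and take the union. The only cosmetic differences are that the paper does the two enlargements $A_{n-1}\to A_n$ and $B_{n-1}\to B_n$ in parallel at each stage (using the cross-inclusions $(\tau_1|_{A_{n-1}})_*(K_0(A_{n-1}))\subset(\tau_2|_{B_n})_*(K_0(B_n))$ and vice versa) rather than in strict alternation, and it does not bother passing first to simple monotrace $\widetilde{A}_0,\widetilde{B}_0$; neither change affects the argument.
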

\begin{proof}
Let $\tau_1$ be the unique trace on $M_1$ and $\tau_2$ the unique trace on $M_2$. 
Since $A_0$ and $B_0$ are separable $C^*$-algebras, 
$(\tau_{1}|_{A_0})_{*}(K_0(A_0))$ and $(\tau_{2}|_{B_0})_{*}(K_0(B_0))$ are 
countable groups. 
We enumerate the positive elements of $(\tau_{1}|_{A_0})_{*}(K_0(A_0))$ by 
$\{t_{m}:m\in\mathbb{N}\}$ and the positive elements of $(\tau_{2}|_{B_0})_{*}(K_0(B_0))$ by 
$\{r_{m}:m\in\mathbb{N}\}$. 
Since $M_1$ and $M_2$ are factors of type $II_1$,  for any 
$m\in\mathbb{N}$ 
there exist a natural number $k$ and 
projections $p_{m}\in M_k (M_1)$ and $q_{m}\in M_k(M_2)$ such that 
$\tau_{1}\otimes Tr(p_{m})=r_{m}$ and $\tau_{2}\otimes Tr(q_{m})=t_{m}$. 
Put $S_1 \subset M_1 $ (resp. $S_2 \subset M_2$) to be the union of 
the matrix elements of $p_m$ (resp. $q_m$) for running  $m \in\mathbb{N}$. 
Define $C_1\subset M_1$ (resp. $D_1\subset M_2$) be the unital $C^*$-subalgebra 
of $M_1$ (resp. $M_2$) generated by $A_0$ and $S_1$ 
(resp. $B_0$ and $S_2$). 
By Lemma \ref{lem:key}, 
there exist separable simple unital 
$C^*$-algebras $A_1$ and $B_1$ with a unique trace such that 
$C_1\subset A_1\subset M_1$ and $D_1\subset B_1\subset M_2$. 
Then we have  
$(\tau_{1}|_{A_0})_{*}(K_0(A_0))\subset (\tau_{2}|_{B_1})_{*}(K_0(B_1))$ and 
$(\tau_{2}|_{B_0})_{*}(K_0(B_0))\subset (\tau_{1}|_{A_1})_{*}(K_0(A_1))$. 
In a similar way, we construct inductively simple separable unital $C^*$-algebras 
$A_n \subset M_1$ and $B_n \subset M_2$ with unique trace such that 
$(\tau_{1}|_{A_{n-1}})_{*}(K_0(A_{n-1}))\subset (\tau_{2}|_{B_n})_{*}(K_0(B_n))$ and 
$(\tau_{2}|_{B_{n-1}})_{*}(K_0(B_{n-1}))\subset (\tau_{1}|_{A_n})_{*}(K_0(A_n))$. 
Set $A=\overline{\cup_{n=1}^{\infty}A_n}$ and $B=\overline{\cup_{n=1}^{\infty}B_n}$. 
Then $A$ and $B$ are separable simple unital $C^*$-algebras with unique trace. 
We denote by $\tau_A$ the unique trace on $A$ and by $\tau_B$ the unique traces on $B$. 
By the construction,  
$(\tau_A)_{*}(K_0(A))=(\tau_B)_{*}(K_0(B))$. 
\end{proof}

We denote by $\mathrm{Ell}(A)$ the Elliott invariant $(K_0(A),K_0(A)_+,[1]_0,K_1(A))$. 
\begin{cor}
For any countable subgroups $G_1$ and $G_2$ of $\mathbb{R}_{+}^{\times}$, 
there exist separable simple nonnuclear unital $C^*$-algebras $A$ and $B$ 
with unique trace such that 
$\mathrm{Ell}(A)\cong\mathrm{Ell}(B)$, 
$\mathcal{F}(A)=G_1$ and $\mathcal{F}(B)=G_2$. 
\end{cor}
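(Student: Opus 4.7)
The plan is to merge the inductive scheme of Theorem \ref{thm:main} with the back-and-forth argument of Lemma \ref{lem:K0}, applying the Phillips regularity lemmas cited in Remark \ref{rem:main} at each stage, so that a single pair $(A,B)$ simultaneously realizes the prescribed fundamental groups and has isomorphic Elliott invariants. By Popa's theorem, fix type $II_1$ factors $M_1, M_2$ with separable preduals, unique traces $\hat{\tau}_i$, and $\mathcal{F}(M_i)=G_i$. For each $i\in\{1,2\}$, enumerate $G_i \cap (0,1]$ as $\{t_m^{(i)}\}_{m\in\mathbb{N}}$ and fix projections $p_m^{(i)}\in M_i$ together with isomorphisms $\phi_m^{(i)}: M_i \to p_m^{(i)} M_i p_m^{(i)}$ witnessing these values.

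Starting from weakly dense countable subsets, construct inductively separable unital $C^*$-subalgebras $A_n\subset M_1$ and $B_n\subset M_2$ such that at stage $n$: \emph{(a)} $A_n$ contains $\phi_m^{(1)}(A_{n-1}) \cup (\phi_m^{(1)})^{-1}(p_m^{(1)} A_{n-1} p_m^{(1)})$ for $m\le n$, and symmetrically for $B_n\subset M_2$; \emph{(b)} $A_n$ contains matrix entries of projections in some $M_k(M_1)$ realizing every element of $(\hat{\tau}_2|_{B_{n-1}})_*(K_0(B_{n-1}))$ under $\hat{\tau}_1\otimes\mathrm{Tr}$, and symmetrically for $B_n$; \emph{(c)} after one separable enlargement invoking Lemma \ref{lem:key} and Phillips' Lemmas 2.3--2.5, the algebras $A_n, B_n$ are simple, unital, have unique trace, stable rank one, real rank zero, and satisfy that $\tau_* : K_0(\cdot)\to \tau_*(K_0(\cdot))$ is an order isomorphism. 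Set $A = \overline{\bigcup_n A_n}$ and $B = \overline{\bigcup_n B_n}$. As in the proof of Theorem \ref{thm:main}, $A$ and $B$ are separable, simple, unital, nonnuclear, and have unique trace, while condition (a) combined with the inclusion $\mathcal{F}(A)\subset\mathcal{F}(\pi_{\tau_A}(A)'')=\mathcal{F}(M_1)$ forces $\mathcal{F}(A)=G_1$ and similarly $\mathcal{F}(B)=G_2$. Condition (b) yields $(\tau_A)_*(K_0(A)) = (\tau_B)_*(K_0(B))$ in the limit, and the properties in (c) pass to inductive limits, so both $K_0(A)$ and $K_0(B)$ are order-isomorphic to their common $\tau_*$-image in $\mathbb{R}$, with $[1]_0\mapsto 1$. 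Hence $(K_0(A), K_0(A)_+, [1]_A) \cong (K_0(B), K_0(B)_+, [1]_B)$.

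To complete the Elliott-invariant match I still need $K_1(A)\cong K_1(B)$. The cleanest route is to insert into the induction an additional step that trivializes each class in $K_1(A_{n-1})$: since $K_1(M_1)=0$, every unitary in $A_{n-1}$ is connected to the identity by a norm-continuous path in $M_1$, so adjoining countably many such paths in the spirit of Phillips' Lemma 2.5 yields $K_1(A)=0$, and symmetrically $K_1(B)=0$. I expect the hard part to be exactly the bookkeeping of this multi-layered induction: at each stage the enlargement must simultaneously secure (a), (b), (c) and the $K_1$-triviality, while not damaging the properties already arranged at the previous stages. Because each new requirement adjoins only countably many elements inside the ambient $II_1$ factor, a single Lemma \ref{lem:key}-style separable enlargement at each stage absorbs all of them, and the inductive limit inherits every required property; the resulting $A$ and $B$ then have isomorphic Elliott invariants together with $\mathcal{F}(A)=G_1$ and $\mathcal{F}(B)=G_2$.
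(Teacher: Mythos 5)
Your proposal is correct and is essentially the paper's own proof: the paper's argument is exactly to combine the construction from the proof of Theorem \ref{thm:main} with the back-and-forth of Lemma \ref{lem:K0} and Phillips' Lemma 2.5 (which supplies $K_1=0$ and makes $\tau_*$ an order isomorphism on $K_0$, so that the matched trace images yield $\mathrm{Ell}(A)\cong\mathrm{Ell}(B)$). You have merely written out explicitly the interleaved separable-enlargement induction that the paper leaves implicit in its one-line citation of these three ingredients.
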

\begin{proof}
The proof of Theorem \ref{thm:main}, Lemma \ref{lem:K0} and Lemma 2.5 of \cite{Phi} 
implies that there exist separable simple nonnuclear unital $C^*$-algebras 
$A$ and $B$ with the unique traces $\tau_A$ and $\tau_B$ such that 
$(\tau_A)_* : K_0(A) \rightarrow (\tau_A)_*(K_0(A))$ and 
$(\tau_B)_* : K_0(B) \rightarrow (\tau_B)_*(K_0(B))$ are order isomorphisms, 
$\mathcal{F}(A)=G_1$, $\mathcal{F}(B)=G_2$, $K_1(A)=K_1(B)=0$ and $
(\tau_A)_{*}(K_0(A))=(\tau_B)_{*}(K_0(B))$. 
Since $(\tau_A)_*$ and $(\tau_B)_*$ are order isomorphisms and 
$(\tau_A)_{*}(K_0(A))=(\tau_B)_{*}(K_0(B))$, we see that $\mathrm{Ell}(A)\cong\mathrm{Ell}(B)$. 
\end{proof}

For a positive number $\lambda$, let 
$G_{\lambda} = \{ {\lambda}^n \in \mathbb{R}_{+}^{\times} \ | 
\ n \in \mathbb{Z} \}$ be the multiplicative subgroup of 
$\mathbb{R}_{+}^{\times}$ generated by $\lambda$. 
In the below we shall consider whether $G_{\lambda}$
can be realized as the fundamental group of a nuclear $C^*$-algebra. 

\begin{pro}
Let $\lambda$ be a prime number or a positive transcendental number. 
Then there exists a simple $AF$-algebra $A$ with unique trace such that 
$\mathcal{F}(A) = G_{\lambda}$. 
\end{pro}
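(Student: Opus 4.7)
The plan is to realize both $G_\lambda$'s as $\mathcal{F}(A)$ for a carefully chosen simple unital $AF$-algebra, using Elliott's classification together with the equality $\mathcal{F}(A)=IM_+(\tau_*(K_0(A)))$ which holds for simple unital $AF$-algebras with unique trace. Concretely, for a simple unital $AF$-algebra $A$ with unique trace $\tau$, the map $\tau_*:K_0(A)\to\mathbb{R}$ is an order embedding onto a dense subgroup $E:=\tau_*(K_0(A))\subset\mathbb{R}$ containing $1$, and any projection $p\in M_n(A)$ with $pM_n(A)p\cong A$ forces its trace value $t=(\tau\otimes Tr)(p)$ to satisfy $t\in E$, $t^{-1}\in E$, and $tE=E$; conversely, given such $t$, Effros--Handelman--Shen and Elliott's classification produce a projection realising it. Thus the problem reduces to exhibiting, for each $\lambda$ as in the statement, a countable dense additive subgroup $E_\lambda\subset\mathbb{R}$ containing $1$ with $IM_+(E_\lambda)=G_\lambda$, and then building the AF-algebra whose pointed dimension group is $(E_\lambda, E_\lambda\cap\mathbb{R}_+, 1)$.

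For $\lambda=p$ a prime, I would take $E_p:=\mathbb{Z}[1/p]$. A unit condition analysis then gives $IM_+(E_p)=G_p$: an element $t\in E_p$ with $t^{-1}\in E_p$ may be written as $t=\pm p^n a/b$ with $a,b$ coprime to $p$, and $t^{-1}\in\mathbb{Z}[1/p]$ forces $a=b=\pm 1$, so $t=p^n$; the invariance $tE_p=E_p$ is then automatic. For $\lambda$ transcendental, I would take $E_\lambda:=\mathbb{Z}[\lambda,\lambda^{-1}]$, which (since $\lambda$ is transcendental) is isomorphic as a ring to the Laurent polynomial ring $\mathbb{Z}[x,x^{-1}]$; its units are exactly $\pm x^n$, hence $IM_+(E_\lambda)=G_\lambda$ once one checks that $tE_\lambda=E_\lambda$ holds automatically for $t=\lambda^n$. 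In both cases $E_\lambda$ is dense in $\mathbb{R}$ and contains $1$.

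Next I would apply Effros--Handelman--Shen: the ordered group $(E_\lambda, E_\lambda\cap\mathbb{R}_+, 1)$ is a simple pointed dimension group (it is totally ordered, unperforated, and satisfies Riesz interpolation trivially; simplicity follows from density, since any nonzero positive element is an order unit), so there is a simple unital $AF$-algebra $A_\lambda$ realising it. Uniqueness of the trace follows from the fact that the inclusion $E_\lambda\hookrightarrow\mathbb{R}$ is the only state of this pointed dimension group, which one verifies by sandwiching an arbitrary positive element between rational multiples of $1$ and using that a positive state sends $1\mapsto 1$.

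Finally, I would combine the two ingredients: $\mathcal{F}(A_\lambda)\subset IM_+(\tau_*(K_0(A_\lambda)))=IM_+(E_\lambda)=G_\lambda$, and conversely for any $t=\lambda^n\in G_\lambda$ one produces a projection $p\in M_k(A_\lambda)$ with $(\tau\otimes Tr)(p)=t$ and checks via Elliott that $pM_k(A_\lambda)p\cong A_\lambda$, giving $t\in\mathcal{F}(A_\lambda)$. The main obstacle is the identification $\mathcal{F}(A)=IM_+(\tau_*(K_0(A)))$ in the AF setting; this is essentially a classification argument, using that for a simple AF-algebra $A$ with unique trace the cut-down $pM_n(A)p$ is again a simple unital AF-algebra whose Elliott invariant is the scaled-back dimension group $(t^{-1}E_\lambda, t^{-1}E_\lambda\cap\mathbb{R}_+, 1)=(E_\lambda, E_\lambda\cap\mathbb{R}_+, 1)$ precisely when $t\in IM_+(E_\lambda)$.
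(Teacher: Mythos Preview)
Your proposal is correct and follows essentially the same approach as the paper: for $\lambda=p$ prime you take the dimension group $\mathbb{Z}[1/p]$, which is exactly the $K_0$ of the UHF algebra $M_{p^\infty}$ the paper invokes, and for $\lambda$ transcendental you take the ring $\mathbb{Z}[\lambda,\lambda^{-1}]$, whose positive unit group is $G_\lambda$, which is precisely the mechanism behind the paper's appeal to Theorem~3.14 of \cite{NW}. The only difference is one of exposition: the paper outsources the AF-classification step and the identification $\mathcal{F}(A)=IM_+(\tau_*(K_0(A)))$ to its predecessor \cite{NW}, whereas you unpack these ingredients (Effros--Handelman--Shen realisation, uniqueness of the state, Elliott's classification for the corner) explicitly.
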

\begin{proof}
Let $\lambda$ be a prime number. Consider a UHF-algebra 
$A = M_{{\lambda}^\infty}$. Then $\mathcal{F}(A) = G_{\lambda}$ as in 
Example 3.11 of \cite{NW}. Next we assume that $\lambda$ is a 
positive transcendental number. Let $R_{\lambda}$ 
be the  unital subring of $\mathbb{R}$ 
generated by $\lambda$. Then the set $(R_{\lambda})^\times_{+}$ of positive 
invertible elements in $R_{\lambda}$ is equal to $G_{\lambda}$. The proof of 
Theorem 3.14  of \cite{NW} shows that there exists a simple unital $AF$-algebra $A$ 
with unique trace such that $\mathcal{F}(A) = G_{\lambda}$. 
\end{proof} 
Let $\mathcal{O}$ be an order of a real quadratic field or 
a real cubic field with one real embedding. 
Then $\mathcal{O}^{\times}_{+}=G_{\lambda}$ is singly 
generated and the generator $\lambda >1$ is called 
the fundamental unit of $\mathcal{O}$ by Dirichlet's unit theorem. 
We refer the reader to \cite{Neu} for details. 
The proof of Theorem 3.14  of \cite{NW} implies the following proposition. 
\begin{pro}
Let $\lambda$ be a fundamental unit of an order of a real quadratic field 
or a cubic field with one real embedding. 
Then there exists a simple $AF$-algebra $A$ with unique trace such that 
$\mathcal{F}(A) = G_{\lambda}$. 
\end{pro}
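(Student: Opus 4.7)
My plan is to imitate the proof of Theorem 3.14 of \cite{NW}, using the order $\mathcal{O}$ itself, viewed as an additive subgroup of $\mathbb{R}$ via the distinguished real embedding, as the target dimension group.

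\medskip

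\noindent\textbf{Step 1: Identify the invariants.} Fix a real embedding of $\mathcal{O}$ into $\mathbb{R}$ and regard $\mathcal{O}$ as a unital countable subring of $\mathbb{R}$ containing $1$. By Dirichlet's unit theorem the group $\mathcal{O}^{\times}$ has rank $r+s-1 = 1$ in both cases (real quadratic: $r=2,s=0$; cubic with one real place: $r=1,s=1$), so $\mathcal{O}^{\times}_{+} = G_{\lambda}$ with $\lambda>1$ the fundamental unit. Set $E := \mathcal{O}$ and $E_{+}:= E\cap \mathbb{R}_{+}$; this is a countable dense simple dimension group with distinguished order unit $1$.

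\medskip

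\noindent\textbf{Step 2: Compute the inner multiplier group of $E$.} I will verify that $IM_{+}(E) = G_{\lambda}$. If $t\in IM_{+}(E)$, then $t$ and $t^{-1}$ both lie in $\mathcal{O}\cap \mathbb{R}_{+}^{\times}$, so $t\in\mathcal{O}^{\times}_{+}=G_{\lambda}$. Conversely, if $t\in\mathcal{O}^{\times}_{+}$, multiplication by $t$ is an automorphism of the additive group $\mathcal{O}$, hence $tE=E$. Thus $IM_{+}(E)=G_{\lambda}$.

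\medskip

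\noindent\textbf{Step 3: Realize $E$ as a dimension group.} By the Effros--Handelman--Shen theorem (exactly as in the proof of Theorem 3.14 of \cite{NW}), the simple countable ordered group $(E,E_{+},1)$ is the dimension group of a simple unital AF-algebra $A$. Such an $A$ has a unique tracial state $\tau$, and the trace gives an order isomorphism $\tau_{*}\colon (K_{0}(A),K_{0}(A)_{+},[1])\xrightarrow{\ \cong\ } (E,E_{+},1)$. In particular $\tau_{*}(K_{0}(A)) = \mathcal{O}$.

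\medskip

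\noindent\textbf{Step 4: Identify the fundamental group.} Since $\tau_{*}$ is an order isomorphism onto $E$, every projection $p\in M_{n}(A)$ with $pM_{n}(A)p\cong A$ corresponds, via $\tau\otimes Tr$, to an element $t\in IM_{+}(E)$; conversely every $t\in IM_{+}(E)$ arises this way, because $t,t^{-1}\in E_{+}$ and $tE=E$ force the corresponding projection in some $M_{n}(A)$ to generate a corner isomorphic to $A$ by Elliott's classification of AF-algebras. This is exactly the inclusion--and--equality argument that concludes the proof of Theorem 3.14 of \cite{NW}, and it yields $\mathcal{F}(A) = IM_{+}(\mathcal{O}) = G_{\lambda}$.

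\medskip

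The computations are all immediate; the only place where care is required is the appeal to Dirichlet's theorem to ensure that the unit rank is exactly $1$ in both cases (so that $\mathcal{O}^{\times}_{+}$ is singly generated), and the observation that $1\in E$ is a genuine order unit because $\mathcal{O}$ is a dense subgroup of $\mathbb{R}$. Once these are in hand, the proof reduces to citing the construction from \cite{NW}.
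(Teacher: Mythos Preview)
Your proof is correct and follows exactly the route the paper intends: the paper's proof is simply the one-line remark that ``the proof of Theorem 3.14 of \cite{NW} implies the proposition,'' and you have unpacked precisely that argument, taking $R=\mathcal{O}$ as the unital subring of $\mathbb{R}$, identifying $IM_{+}(\mathcal{O})=\mathcal{O}^{\times}_{+}=G_{\lambda}$ via Dirichlet, realizing $\mathcal{O}$ as a dimension group by Effros--Handelman--Shen, and concluding via Elliott's classification.
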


Note that if $p$ is a prime number and $n \geq 2$, then 
the subgroup $G_{\lambda}$ of $R_+^{\times}$ 
generated by ${\lambda} = p^n$ can not be 
the  positive inner multiplier group $IM_+(E)$ for any 
additive subgroup $E$ of 
$\mathbb{R}$ containing 1.  In fact, on the contrary, suppose  that 
$G_{\lambda} = IM_+(E)$ for some $E$. Then there exists a unital subring 
$R$ of $\mathbb{R}$ such that $G_{\lambda}= R_+^{\times}$ by 
Lemma 3.6 of \cite{NW}. Then 
$\frac{1}{p} = \frac{1}{\lambda} + \dots + \frac{1}{\lambda} 
\in R_+^{\times}$. This contradicts that $\frac{1}{p} \notin G_{\lambda}$.   
However,  we have another construction.  

\begin{ex}\label{ex:matui} For $\lambda = 3^2 = 9$, 
Matui shows us the following example: 
Let $A$ be an $AF$-algebra such that 
$$
K_0(A)=\{(\frac{b}{9^a},c) \in \mathbb{R} \times \mathbb{Z} \ | 
\ a,b,c\in\mathbb{Z},b\equiv c\; \mathrm{mod}\; 8\},
$$
$$
K_0(A)_{+}=\{(\frac{b}{9^a},c)\in K_0(A):\frac{b}{9^a}>0\}\cup \{(0,0)\}
\ \ \text{and} \ \   [1_A]_0=(1,1).
$$ 
Then 
$$
\mathcal{F}(A) = G_9:= 
 \{ 9^n \in \mathbb{R}_{+}^{\times} \ | 
\ n \in \mathbb{Z} \}
$$ 
Moreover $\tau_* : K_0(A) \rightarrow \tau_*(K_0(A))$ is not an 
order isomorphism and 
$\mathcal{F}(A) \not= IM_+(\tau_*(K_0(A)))$. 

Furthermore  Katsura suggests us the following examples: 
Let $\lambda = p^n$ for a prime number $p$  and a natural number $n \geq 2$. 
Then there exists a simple $AF$-algebra $A$ with unique trace such that 
$\mathcal{F}(A) = G_{\lambda}$.

First consider the case that $\lambda \geq 8$. 
Define 
$$
E=\{(\frac{b}{p^{na}},c) \in \mathbb{R} \times \mathbb{Z} 
\ | \ a,b,c\in\mathbb{Z},b\equiv c\; \mathrm{mod}\; (p^n-1) \}
$$ 
$$
E_+=\{(\frac{b}{p^{na}},c)\in E:\frac{b}{p^{na}}>0\}\cup \{(0,0)\}
\ \ \text{and} \ \  [u]_0=(1,1).
$$
Then there exists a simple $AF$-algebra $A$ 
such that 
$(K_0(A),K_0(A)_+,[1_A]_0)=(E,E_+,u)$ by \cite{EHS}. 
The classification theorem of \cite{E} and some computation yield that  
$\mathcal{F}(A) = G_{\lambda}$.

Next consider the case that $\lambda = 2^2=4$. 
Let 
$$
E=\{(\frac{b}{16^{a}},c) \in \mathbb{R} \times \mathbb{Z} \ | \ 
 a,b,c\in\mathbb{Z},b\equiv c\; \mathrm{mod}\; 5 \}
$$  
$$
E_+=\{(\frac{b}{16^a},c)\in E:\frac{b}{16^{a}}>0\}\cup \{(0,0)\}
\ \ \text{and} \ \   [u]_0=(1,1).
$$
Consider a simple $AF$-algebra $A$ such that 
$(K_0(A),K_0(A)_+,[1_A]_0)=(E,E_+,u)$. Then $\mathcal{F}(A) = G_{4}$.

\end{ex}


\begin{thebibliography}{99}

\bibitem{Bla}
B. Blackadar,
\textit{Weak expectations and nuclear $C^*$-algebras}, 
Indiana Univ. Math. J. \textbf{27} (1978), 1021-1026. 


\bibitem{BGR}
L. G. Brown, P.Green and M. A. Rieffel, 
\textit{Stable isomorphism and strong Morita equivalence of $C^*$-algebras},
Pacific J. Math. \textbf{71} (1977), 349-363.


\bibitem{Co}
A. Connes, 
\textit{A factor of type $II_1$ with countable fundamental group}, 
J. Operator Theory \textbf{4} (1980), 151-153. 


\bibitem{EHS}
E. Effros, D. Handelman and C. L. Shen, 
\textit{Dimension groups and their affine representations}, 
Amer. J. Math. \textbf{102} (1980), no. 2, 385--407. 


\bibitem{E}
G. A. Elliott, 
\textit{On the classification of inductive limits of sequences of 
semisimple finite-dimensional algebras}, 
J. Algebra \textbf{38} (1976), 29--44. 


\bibitem{FL}
M. Frank and D. Larson, 
\textit{A module frame concept for Hilbert $C^*$-modules},
Contemporary Mathematics \textbf{247} (1999), 207-233. 


\bibitem{KW}
T. Kajiwara and Y. Watatani,
\textit{Jones index theory by Hilbert $C^*$-bimodules and 
K-theory}, 
Trans. Amer. Math. Soc. \textbf{352} (2000), 3429-3472. 


\bibitem{kod1}
K. Kodaka,
\textit{Full projections, equivalence bimodules and automorphisms of stable algebras of 
unital $C^*$-algebras},
J. Operator Theory, \textbf{37} (1997), 357-369. 

\bibitem{kod2}
K. Kodaka,
\textit{Picard groups of irrational rotation $C^*$-algebras},
J. London Math. Soc. (2) \textbf{56} (1997), 179-188. 

\bibitem{kod3}
K. Kodaka,
\textit{Projections inducing automorphisms of stable UHF-algebras}, 
Glasg. Math. J. \textbf{41} (1999), no. 3, 345--354.

\bibitem{Lan}
E. C. Lance,
\textit{Hilbert $C^*$-modules},
London Mathematical Society Lecture Note Series, \textbf{210}, 
Cambridge University Press, Cambridge, 1995.


\bibitem{MT}
V. M. Manuilov and E. V. Troitsky,
\textit{Hilbert $C^*$-Modules},
Translations of Mathematical Monographs, \textbf{226}, 
American Mathematical Society, Providence, RI, 2005.



\bibitem{MN}
F. Murray and J. von Neumann,
\textit{On rings of operators IV},
Ann. Math. \textbf{44}, (1943), 716--808. 


\bibitem{NW}
N. Nawata and Y. Watatani,
\textit{Fundamental group of simple $C^*$-algebras with unique trace},
to appear in Adv. Math.


\bibitem{Neu}
J. Neukirch, 
\textit{Algebraic Number Theory}, 
Grundlehren Math. Wiss. vol. \textbf{322}, Springer-Verlag, New York, 1999.


\bibitem{Phi}
N.C. Phillips,
\textit{A Simple separable $C^*$-algebra not isomorphic to its opposite algebra},
Proc. Amer. Math. Soc. \textbf{132} (10) (2004), 2997-3005. 


\bibitem{Po1}
S. Popa, 
\textit{Strong rigidity of $II_1$ factors arising from malleable actions of 
w-rigid groups, I}, 
Invent.  Math. \textbf{165} (2006), 369-408. 

\bibitem{PV}
S. Popa and S. Vaes,
\textit{Actions of $\mathbb{F}_{\infty}$ whose $II_1$ factors and orbit equivalence relations 
have prescribed fundamental group},
J. Amer. Math. Soc. \textbf{23} (2010), 383-403. 

\bibitem{Ra}
F. Radulescu, 
\textit{The fundamental group of the von Neumann algebra of a 
free group with infinitely many generators is $\mathbb{R}^*_+$}, 
J. Amer. Math. Soc. \textbf{5} (1992), 517-532. 


\bibitem{RW}
I. Raeburn and D. P. Williams,
\textit{Morita Equivalence and Continuous-Trace $C^*$-Algebras},
Mathematical Surveys and Monographs, \textbf{60}, 
American Mathematical Society, Providence, RI, 1998.


\bibitem{R2}
M. A. Rieffel,
\textit{Morita equivalence for operator algebras}, 
Operator algebras and applications, Part I (Kingston, Ont., 1980), pp. 285--298, 
Proc. Sympos. Pure Math., \textbf{38}, Amer. Math. Soc., Providence, R.I., 1982. 


\bibitem{Vo}
D. Voiculescu, 
\textit{Circular and semicircular systems and free product factors}, 
in Operator algebras, unitary representations, enveloping algebras, and 
invariant theory, Progr. Math. 92, Birkh\"{a}user, Boston, 1990, 45--60. 


\bibitem{W}
Y. Watatani, 
\textit{Index for $C^*$-subalgebras},
Memoir AMS \textbf{424} (1990).  



\end{thebibliography}
\end{document}